\let\pa\partial
\let\na\nabla
\let\eps\varepsilon
\newcommand{\R}{\mathbb{R}}
\newcommand{\N}{\mathbb{N}}
\newtheorem{theorem}{Theorem}
\newtheorem{proposition}[theorem]{Proposition}
\newtheorem{lemma}[theorem]{Lemma}
\theoremstyle{remark}
\begin{document}
\title[A note on Aubin-Lions-Dubinski\u{\i} lemmas]{A note on
Aubin-Lions-Dubinski\u{\i} lemmas}

\author[X. Chen]{Xiuqing Chen}
\address{School of Sciences, Beijing University of Posts and Telecommunications,
  Beijing, 100876 China; Institute for Analysis and Scientific Computing, Vienna University of
	Technology, Wiedner Hauptstra\ss e 8--10, 1040 Wien, Austria}
\email{buptxchen@yahoo.com}

\author[A. J\"ungel]{Ansgar J\"ungel}
\address{Institute for Analysis and Scientific Computing, Vienna University of
	Technology, Wiedner Hauptstra\ss e 8--10, 1040 Wien, Austria}
\email{juengel@tuwien.ac.at}

\author[J.-G. Liu]{Jian-Guo Liu}
\address{Department of Physics and Department of Mathematics, Duke University
  Durham, NC 27708, USA}
\email{jian-guo.liu@duke.edu}

\date{\today}

\thanks{The first author acknowledges support from the National Science
Foundation of China, grant 11101049.
The second author acknowledges partial support from
the Austrian Science Fund (FWF), grants P20214, P22108, I395, and W1245.
This research was supported by the European Union under
Grant Agreement number 304617 (Marie-Curie Project ``Novel Methods in
Computational Finance'').
The third author acknowledges support from the National Science Foundation of
the USA, grant DMS 10-11738.
}

\begin{abstract}
Strong compactness results for families of functions in seminormed nonnegative cones
in the spirit of the Aubin-Lions-Dubinski\u{\i} lemma
are proven, refining some recent results in the literature.
The first theorem sharpens slightly a result of Dubinski\u{\i} (1965) for
seminormed cones.
The second theorem applies to piecewise constant functions in time and sharpens
slightly the results of Dreher and J\"ungel (2012) and Chen and Liu (2012).
An application is given, which is useful in the study of porous-medium or
fast-diffusion type equations.
\end{abstract}

\keywords{Compactness in Banach spaces, Rothe method, Dubinskii lemma,
seminormed cone.}

\subjclass[2000]{46B50, 35A35.}

\maketitle


\section{Introduction}\label{sec.intro}

The Aubin-Lions lemma
states criteria under which a set of functions is relatively compact in
$L^p(0,T;B)$, where $p\ge 1$, $T>0$, and $B$ is a Banach space.
The standard Aubin-Lions lemma states that if $U$ is bounded in $L^p(0,T;X)$ and
$\pa U/\pa t=\{\pa u/\pa t:u\in U\}$ is bounded in $L^r(0,T;Y)$, then
$U$ is relatively compact in $L^p(0,T;B)$,
under the conditions that
\begin{equation*}
  X\hookrightarrow B \mbox{ compactly}, \quad
  B\hookrightarrow Y \mbox{ continuously},
\end{equation*}
and either $1\le p<\infty$, $r=1$ or
$p=\infty$, $r>1$. Typically, when $U$ consists of approximate solutions to an
evolution equation, the boundedness of $U$ in $L^p(0,T;X)$ comes from suitable
a priori estimates, and the boundedness of $\pa U/\pa t$ in $L^r(0,T;Y)$
is a consequence of the evolution equation at hand. The compactness is needed
to extract a sequence in the set of approximate solutions,
which converges strongly in $L^p(0,T;B)$. The limit is expected to be a
solution to the original evolution equation, thus yielding an existence result.

In recent years, nonlinear counterparts of the Aubin-Lions lemma were shown
\cite{BaSu12,ChLi12,Mai03}.
In this note, we aim to collect these results, which are scattered in the literature,
and to prove some refinements.
In particular, we concentrate on the case in which the set $U$ is bounded
in $L^p(0,T;M_+)$, where $M_+$ is a nonnegative cone (see below). This situation
was first investigated by Dubinski\u{\i}, and therefore, we call the
corresponding results Aubin-Lions-Dubinski\u{\i} lemmas.

Before detailing our main results, let us review the compactness theorems in
the literature.
The first result on the compact embedding of spaces of Banach space valued functions was
shown by Aubin in 1963 \cite{Aub63}, extended by Dubinski\u{\i} in 1965 \cite{Dub65},
also see \cite[Th\'eor\`eme 5.1, p.~58]{Lio69}. Some unnecessary assumptions
on the spaces were removed by Simon in his famous paper \cite{Sim87}.
The compactness embedding result was sharpened by Amann \cite{Ama00} involving
spaces of higher regularity, and by Roub\'{\i}\v{c}ek, assuming that
the space $Y$ is only locally convex Hausdorff \cite{Rou90}
or that $\pa U/\pa t$ is bounded in the space of vector-values measures
\cite[Corollary 7.9]{Rou05}. This condition can be replaced by a boundedness
hypothesis in a space of functions with generalized bounded variations
\cite[Prop.~2]{Kal13}. A result on compactness in $L^p(\R;B)$ can be found in
\cite[Theorem 13.2]{Tem95}.

The boundedness of $U$ in $L^p(0,T;B)$
can be weakened to tightness of $U$ with respect to a certain lower semicontinuous
function; see \cite[Theorem 1]{RoSa03}. Also the converse of the Aubin-Lions lemma
was proved (see \cite{RaTe01} for a special situation).

Already Dubinski\u{\i} \cite{Dub65} observed that
the space $X$ can be replaced by a seminormed set, which can be interpreted as a
nonlinear version of the Aubin-Lions lemma. (Recently, Barrett and S\"uli
\cite{BaSu12} corrected an oversight in Theorem 1 of \cite{Dub65}.) Furthermore,
the space $B$ can be replaced by $K(X)$, where $K:X\to B$ is a compact operator,
as shown by Maitre \cite{Mai03}, motivated by the nonlinear compactness result
of Alt and Luckhaus \cite{AlLu83}.

Instead of boundedness of $\pa U/\pa t$ in $L^r(0,T;Y)$, the condition
on the time shifts
$$
  \|\sigma_h u-u\|_{L^p(0,T-h;Y)}\to 0\quad\mbox{as }h\to 0, \quad
	\mbox{uniformly in }u\in U,
$$
where $(\sigma_h u)(t)=u(t+h)$, can be imposed to achieve compactness
\cite[Theorem 5]{Sim87}. If the functions $u_\tau$ in $U$ are piecewise constant
in time with uniform time step $\tau>0$, this assumption was simplified
in \cite{DrJu12} to
$$
  \|\sigma_\tau u_\tau-u_\tau\|_{L^r(0,T-h;Y)}\le C\tau,
$$
where $C>0$ does not depend on $\tau$. This condition avoids
the construction of linear interpolations of $u_\tau$
(also known as Rothe functions \cite{Kac85}). Nonlinear versions
were given in \cite{ChLi12}, generalizing the results of Maitre.

In the literature, discrete versions of the Aubin-Lions lemma were
investigated. For instance, compactness properties for a discontinuous and continuous
Galerkin time-step scheme were shown in \cite[Theorem 3.1]{Wal10}. In \cite{GaLa12},
compactness to sequences of functions obtained by a Faedo-Galerkin approximation
of a parabolic problem was studied.

In this note, we generalize some results of \cite{ChLi12,DrJu12} (and \cite{GaLa12})
to seminormed nonnegative cones.
We call $M_+$ a {\em seminormed nonnegative cone} in a Banach space $B$ if
the following conditions hold:
$M_+\subset B$; for all $u\in M_+$ and $c\ge 0$, $cu\in M_+$; and if there
exists a function $[\cdot]:M_+\to[0,\infty)$ such that $[u]=0$ if and only if
$u=0$, and $[cu]=c[u]$ for all $c\ge 0$. We say that $M_+\hookrightarrow B$
continuously, if there exists $C>0$ such that $\|u\|_B\le C[u]$ for all
$u\in M_+\subset B$.
Furthermore, we write $M_+\hookrightarrow B$ compactly, if for any bounded sequence
in $M_+$, there exists a subsequence converging in $B$.

\begin{theorem}[Aubin-Lions-Dubinski\u{\i}]\label{thm.dub}
Let $B$, $Y$ be Banach spaces and $M_+$ be a seminormed nonnegative cone in $B$
with $M_+\cap Y\neq\emptyset$. Let $1\le p\le\infty$. We assume that
\begin{enumerate}[{\rm (i)}]
\item $M_+\hookrightarrow B$ compactly.
\item For all $(w_n)\subset B$,
$w_n\to w$ in $B$, $w_n\to 0$ in $Y$ as $n\to\infty$ imply that $w=0$.
\item $U\subset L^p(0,T;M_+\cap Y)$ is bounded in $L^p(0,T;M_+)$.
\item $\|\sigma_h u-u\|_{L^p(0,T-h;Y)}\to 0$ as $h\to 0$, uniformly in $u\in U$.
\end{enumerate}
Then $U$ is relatively compact in $L^p(0,T;B)$ (and in $C^0([0,T];B)$ if
$p=\infty)$.
\end{theorem}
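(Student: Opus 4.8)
\emph{Strategy.}
The plan is to adapt Simon's proof of his Theorem~5 in \cite{Sim87}: first upgrade the hypothesis (iv) on time shifts from $Y$ to $B$, and then combine this with a relative-compactness statement for the time-averages of $U$ in $B$. The two points where the cone structure of $M_+$ causes trouble are that differences $u(t+h)-u(t)$ need not lie in $M_+$, so the classical Lions interpolation inequality does not apply to them, and that Bochner integrals $\int_{t_1}^{t_2}u(s)\,ds$ need not lie in $M_+$, so the usual ``bounded in $L^p(0,T;M_+)$ $\Rightarrow$ integrals bounded in $M_+$'' argument breaks down. I expect the latter point --- relative compactness of the time-integrals in $B$ --- to be the main obstacle.

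\emph{From $Y$ to $B$.}
Since every sequence bounded in $M_+$ has a $B$-convergent, hence $B$-bounded, subsequence, the homogeneity of $[\cdot]$ yields a constant $C>0$ with $\|v\|_B\le C[v]$ for all $v\in M_+$; with (iii) this makes $U$ bounded in $L^p(0,T;B)$, and I set $K:=\sup_{u\in U}\|[u(\cdot)]\|_{L^p(0,T)}$. Using only (i) and (ii) I would then prove the difference interpolation inequality: for each $\eta>0$ there is $C_\eta>0$ with
$$\|v-w\|_B\le\eta\big([v]+[w]\big)+C_\eta\|v-w\|_Y\qquad\text{for all }v,w\in M_+\cap Y,$$
by contradiction --- normalising $\|v_n-w_n\|_B=1$ gives $[v_n],[w_n]$ bounded and $\|v_n-w_n\|_Y\to0$, so along a subsequence $v_n\to\bar v$, $w_n\to\bar w$ in $B$ by (i), hence $v_n-w_n\to\bar v-\bar w$ in $B$ and $\to0$ in $Y$, so $\bar v=\bar w$ by (ii), contradicting $\|v_n-w_n\|_B=1$. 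Applying this pointwise in $t$ with $v=u(t+h)$, $w=u(t)$, and using Minkowski's inequality and (iii), gives
$$\sup_{u\in U}\|\sigma_h u-u\|_{L^p(0,T-h;B)}\le 2\eta K+C_\eta\sup_{u\in U}\|\sigma_h u-u\|_{L^p(0,T-h;Y)};$$
letting $h\to0$ via (iv) and then $\eta\to0$ shows $\sup_{u\in U}\|\sigma_h u-u\|_{L^p(0,T-h;B)}\to0$ as $h\to0$ (with the essential supremum in place of the integral when $p=\infty$).

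\emph{Time-integrals.}
For fixed $0\le t_1<t_2\le T$, let $S:=\{v\in M_+:[v]\le1\}$; by (i), $\overline S$ is compact in $B$, hence so is its closed convex hull $Q:=\overline{\mathrm{co}}(\overline S)$ (Mazur's theorem) and the set $[0,M_0]\cdot Q$ for any $M_0>0$. For $u\in U$, writing $u(s)=[u(s)]\cdot(u(s)/[u(s)])$ where $[u(s)]>0$ (and $u(s)=0$ elsewhere), $\int_{t_1}^{t_2}u(s)\,ds$ is the integral of the $\overline S$-valued map $s\mapsto u(s)/[u(s)]$ against the finite measure $[u(s)]\,ds$, whose total mass is $\int_{t_1}^{t_2}[u(s)]\,ds\le M_0:=\max\{(t_2-t_1)^{1-1/p},1\}\,K$; a standard barycenter (Hahn--Banach separation) argument then places this integral in $[0,M_0]\cdot Q$. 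Hence $\{\int_{t_1}^{t_2}u(s)\,ds:u\in U\}$ is contained in a compact subset of $B$.

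\emph{Conclusion.}
For $1\le p<\infty$, partition $[0,T]$ into intervals of mesh $h$ and approximate each $u\in U$ by the step function $\bar u_h$ equal to the average of $u$ over the interval containing $t$; the error $\|u-\bar u_h\|_{L^p(0,T;B)}$ is controlled by the modulus from the second step, uniformly in $u$, while for each fixed $h$ the family $\{\bar u_h:u\in U\}$ is relatively compact in $L^p(0,T;B)$ since it is determined by finitely many time-averages, each ranging in a compact subset of $B$ by the third step; a diagonal argument over $h\to0$ then gives relative compactness of $U$ in $L^p(0,T;B)$ (this is precisely the linear criterion of Simon \cite[Theorem~1]{Sim87}). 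For $p=\infty$, the $L^\infty$-modulus from the second step identifies each $u\in U$ with an element of $C^0([0,T];B)$ and makes $U$ equicontinuous there, while $u(t)=\lim_{h\to0^+}h^{-1}\int_t^{t+h}u(s)\,ds$ lies in the compact set $\overline{\mathrm{co}}(\overline{\{v\in M_+:[v]\le K\}})$; the Arzel\`a--Ascoli theorem then gives relative compactness of $U$ in $C^0([0,T];B)$, hence in $L^\infty(0,T;B)$.
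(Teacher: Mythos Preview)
Your proof is correct and follows essentially the same route as the paper: both establish the Ehrling-type difference inequality (your ``From $Y$ to $B$'' step is the paper's Lemma~\ref{lem.dub}, with a slightly different normalisation in the contradiction argument) and use it to upgrade condition (iv) from $Y$ to $B$. The only difference is in the concluding step, where the paper simply cites Lemma~6 of \cite{ChLi12} while you supply a self-contained argument via Mazur's theorem for the compactness of the time-integrals and then Simon's Theorem~1 (respectively Arzel\`a--Ascoli when $p=\infty$); this amounts to unpacking the cited lemma rather than taking a genuinely different approach.
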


This result generalizes slightly Theorem 3 in \cite{ChLi12}.
The novelty is that
we do {\em not} require the continuous embedding $B\hookrightarrow Y$.
If both $B$ and $Y$ are continuously embedded in a metric space
(such as some Sobolev space with negative index) or in the space of distributions
${\mathcal D}'$, which is naturally satisfied in nearly all applications,
then condition (ii) clearly holds. Therefore, we do not need to check the
continuous embedding $B\hookrightarrow Y$, which is sometimes not obvious, like in
\cite[pp.~1206-1207]{ChLi13}, where $B$ is an $L^1$ space with a
complicated weight and $Y$ is related to a Sobolev space with negative index.
Thus, this generalization is not only interesting in functional analysis but
also in applications.

The proof of Theorem \ref{thm.dub} is motivated by Theorem 3.4 in \cite{GaLa12}
and needs a simple but new idea.
Taking the proof of Theorem 5 in \cite{Sim87} as an example, we compare the
traditional proof and our new idea.
For this, we first list some statements:
\begin{align}
  & B\hookrightarrow Y \mbox{ continuously},\label{glem4} \\
  & X\hookrightarrow B \mbox{ compactly},\label{glem3} \\
  & X\hookrightarrow Y \mbox{ compactly},\label{glem6} \\
  &\forall\,\varepsilon>0,\,\exists\,C_\varepsilon>0,\,\forall u\in X,\;
	\|u\|_B\le \varepsilon \|u\|_X +C_\varepsilon\|u\|_{Y},\label{glem6+0} \\
  & U\; \mbox{is a bounded subset of}\; L^p(0,T;X),\label{glem7} \\
  & \|\sigma_hu-u\|_{L^p(0,T-h;Y)}\rightarrow0\;\mbox{as}\;h\rightarrow0,
	\;\mbox{uniformly for}\;u\in U,\label{glem8}\\
  & \|\sigma_hu-u\|_{L^p(0,T-h;B)}\rightarrow0\;\mbox{as}\;h\rightarrow0,
	\;\mbox{uniformly for}\;u\in U,\label{glem9}\\
  & U\; \mbox{is relatively compact in}\; L^p(0,T;Y),\label{glem10} \\
  & U\; \mbox{is relatively compact in}\; L^p(0,T;B).\label{glem11}
\end{align}

Simon proves \eqref{glem11} \cite[Theorem 5]{Sim87} using the steps
\begin{enumerate}[{\rm I.}]
\item Theorem 5 in \cite{Sim87}: \eqref{glem4}, \eqref{glem3}, \eqref{glem7},
  \eqref{glem8} $\Rightarrow$ \eqref{glem11}.
\item Lemma 8 in \cite{Sim87}: \eqref{glem4}, \eqref{glem3}
  $\Rightarrow$ \eqref{glem6+0}.
\item Theorem 3 in \cite{Sim87}: \eqref{glem3}, \eqref{glem7}, \eqref{glem9}
  $\Rightarrow$ \eqref{glem11}, or \eqref{glem6}, \eqref{glem7}, \eqref{glem8}
	$\Rightarrow$ \eqref{glem10}.
\end{enumerate}

More precisely, 
\begin{align*}
& \hspace*{4mm}\mbox{\em Traditional proof of I}:
  \hspace{40mm} \mbox{\em New proof of I}: \\
& \left.\begin{array}{ll}
\hspace{2.2cm}
\eqref{glem4}, \eqref{glem3}\xLongrightarrow{\mbox{II}}&\eqref{glem6+0}\\
&\eqref{glem7}\\
\left.\begin{array}{ll}
      \eqref{glem4}, \eqref{glem3}\Longrightarrow\!\!&\eqref{glem6}\\
      &\eqref{glem7}\\
      &\eqref{glem8}
      \end{array}\right\}\xLongrightarrow{\mbox{III}}
&\eqref{glem10}
\end{array}\right\}\Longrightarrow\eqref{glem11}; \quad\quad
\left.\begin{array}{ll}
&\eqref{glem3}\\
&\eqref{glem7}\\
\left.\begin{array}{ll}
      \eqref{glem4}, \eqref{glem3}\xLongrightarrow{\mbox{II}}&\eqref{glem6+0}\\
      &\eqref{glem7}\\
      &\eqref{glem8}
      \end{array}\right\}\Longrightarrow
&\eqref{glem9}
\end{array}\right\}\xLongrightarrow{\mbox{III}}\eqref{glem11}.
\end{align*}

In the traditional proof of I, the step \eqref{glem4}, \eqref{glem3} $\Rightarrow$
\eqref{glem6} depends on the continuous embedding \eqref{glem4}. Hence, in that proof,
\eqref{glem4} is essential. In our new proof, only step II:
\eqref{glem4}, \eqref{glem3} $\Rightarrow$ \eqref{glem6+0} depends on \eqref{glem4},
which can be replaced by condition (ii) of Theorem \ref{thm.dub}.
This condition follows from \eqref{glem4} and hence, it is weaker than \eqref{glem4}.

If $U$ consists of piecewise constant functions in time $(u_\tau)$
with values in a Banach space, condition (iv) in Theorem \ref{thm.dub}
can be simplified.
The main feature is that it is sufficient to verify one uniform estimate
for the time shifts $u_\tau(\cdot+\tau)-u_\tau$ instead of all time shifts
$u_\tau(\cdot+h)-u_\tau$ for $h>0$.

\begin{theorem}[Aubin-Lions-Dubinski\u{\i} for piecewise constant functions in time]
\label{thm.dub.pw}\
Let $B$, $Y$ be Ba\-nach spaces and $M_+$ be a seminormed nonnegative cone in $B$.
Let either $1\le p<\infty$, $r=1$ or $p=\infty$, $r>1$. Let
$(u_\tau)\subset L^p(0,T;M_+\cap Y)$ be a sequence of functions,
which are constant on each subinterval $((k-1)\tau,k\tau]$, $1\le k\le N$, $T=N\tau$.
We assume that
\begin{enumerate}[{\rm (i)}]
\item $M_+\hookrightarrow B$ compactly.
\item For all $(w_n)\subset B$,
$w_n\to w$ in $B$, $w_n\to 0$ in $Y$ as $n\to\infty$ imply that $w=0$.
\item $(u_\tau)$ is bounded in $L^p(0,T;M_+)$.
\item There exists $C>0$ such that for all $\tau>0$,
$\|\sigma_\tau u_\tau-u_\tau\|_{L^r(0,T-\tau;Y)}\le C\tau$.
\end{enumerate}
Then, if $p<\infty$, $(u_\tau)$ is relatively compact in $L^p(0,T;B)$ and if
$p=\infty$, there exists a subsequence of $(u_\tau)$ converging in $L^q(0,T;B)$ for
all $1\le q<\infty$ to a limit function belonging to $C^0([0,T];B)$.
\end{theorem}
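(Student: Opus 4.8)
I would deduce Theorem~\ref{thm.dub.pw} from Theorem~\ref{thm.dub}, applied to the family $U=\{u_\tau\}$. Conditions (i)--(iii) of Theorem~\ref{thm.dub} are immediate (in the case $p=\infty$ after noting $L^\infty(0,T;M_+)\subset L^1(0,T;M_+)$), so the whole difficulty is to turn the single-time-step bound (iv) of Theorem~\ref{thm.dub.pw} into the uniform smallness of \emph{all} time shifts required by (iv) of Theorem~\ref{thm.dub}. Write $u_k\in M_+\cap Y$ for the value of $u_\tau$ on $((k-1)\tau,k\tau]$ and put $a_k=\|u_{k+1}-u_k\|_Y$. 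Since $u_\tau$ is piecewise constant, $\|\sigma_\tau u_\tau-u_\tau\|_{L^r(0,T-\tau;Y)}^r=\tau\sum_{k=1}^{N-1}a_k^r$, so (iv) gives $\sum_k a_k^r\le C^r\tau^{r-1}$; Hölder's inequality over the $N=T/\tau$ indices (trivial for $r=1$) then yields the $\tau$-independent bound $\sum_k a_k\le C'$ with $C'=CT^{1-1/r}$.

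The heart of the proof is the estimate: for every $q\in[1,\infty)$ and $h\in(0,T)$,
\[
  \|\sigma_h u_\tau-u_\tau\|_{L^q(0,T-h;Y)}\le 2C'\,h^{1/q},
\]
with $C'$ independent of $\tau$. I would prove this by splitting $h=m\tau+h'$, $0\le h'<\tau$. The residual shift $\sigma_{h'}u_\tau-u_\tau$ is supported on $\bigcup_k(k\tau-h',k\tau]$ with value $u_{k+1}-u_k$ there, so its $L^q(Y)$-norm to the $q$-th power equals $h'\sum_k a_k^q\le h'(\sum_k a_k)^q\le h'(C')^q$; for the full steps one telescopes, $(\sigma_{m\tau}u_\tau-u_\tau)(t)=u_{\ell+m}-u_\ell$ on $((\ell-1)\tau,\ell\tau]$, so $\|u_{\ell+m}-u_\ell\|_Y\le b_\ell:=\sum_{j=0}^{m-1}a_{\ell+j}$, and since $\|b\|_{\ell^\infty}\le\|a\|_{\ell^1}\le C'$ and $\|b\|_{\ell^1}\le m\|a\|_{\ell^1}\le mC'$ one gets $\|\sigma_{m\tau}u_\tau-u_\tau\|_{L^q(Y)}^q\le\tau\sum_\ell b_\ell^q\le\tau\|b\|_{\ell^\infty}^{q-1}\|b\|_{\ell^1}\le m\tau(C')^q$; the triangle inequality, after translating the integration intervals, combines the two pieces. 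Running the same computation with $\ell^\infty$ in place of $\ell^q$ gives, pointwise away from the breakpoints, $\|u_\tau(t)-u_\tau(s)\|_Y\le C(|t-s|+\tau)^{(r-1)/r}$.

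For $p<\infty$ (hence $r=1$), Theorem~\ref{thm.dub} now applies directly with exponent $p$ — its (iv) holds by the displayed estimate with $q=p$ — giving relative compactness of $(u_\tau)$ in $L^p(0,T;B)$. For $p=\infty$ (hence $r>1$) I would apply Theorem~\ref{thm.dub} with exponent $1$, obtaining relative compactness in $L^1(0,T;B)$; extract $u_{\tau_n}\to u$ in $L^1(0,T;B)$ and, after a further extraction, a.e.\ in $B$. Since $M_+\hookrightarrow B$ compactly forces $M_+\hookrightarrow B$ continuously, $\|u_{\tau_n}(t)\|_B\le C[u_{\tau_n}(t)]$ is bounded uniformly in $n$ and $t$, so dominated convergence upgrades this to $u_{\tau_n}\to u$ in $L^q(0,T;B)$ for every $q<\infty$, and $u\in L^\infty(0,T;B)$.

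The hard part will be to show that $u$ has a representative in $C^0([0,T];B)$: because $M_+$ is only a cone, $u(t)-u(s)\notin M_+$ in general and one cannot simply insert this difference into the compact embedding. I would instead run the interpolation argument on the functions $u_{\tau_n}$ themselves, as in the proof of Theorem~\ref{thm.dub} and in the treatment of the limiting case in \cite{Sim87}, combining the uniform $M_+$-bound, the uniform $Y$-modulus of continuity $\|u_{\tau_n}(t)-u_{\tau_n}(s)\|_Y\le C(|t-s|+\tau_n)^{(r-1)/r}$, condition (ii), and compactness of $M_+\hookrightarrow B$ to show that the $u_{\tau_n}$ are equicontinuous from $[0,T]$ into $B$ off an arbitrarily small set of times; together with the a.e.\ convergence this forces $\|u(t)-u(s)\|_B\le\Phi(|t-s|)$ for a.e.\ $s,t$, with $\Phi(h)\to0$ as $h\to0$, so that $u$ coincides a.e.\ with a function in $C^0([0,T];B)$.
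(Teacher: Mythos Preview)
Your proposal is correct. For $p<\infty$ it follows the paper exactly: reduce to Theorem~\ref{thm.dub} via a time-shift estimate in $Y$. The paper isolates this estimate as Lemma~\ref{lem.pw} and proves it by writing $u_\tau(t)=u_1+\sum_k(u_{k+1}-u_k)H(t-k\tau)$ and bounding each Heaviside difference in $L^p$; your splitting $h=m\tau+h'$ is an equally valid alternative proof of the same inequality.

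For $p=\infty$ the routes genuinely diverge. The paper introduces the piecewise \emph{linear} interpolants $\tilde u_\tau$ (Rothe functions), observes that $\|\partial_t\tilde u_\tau\|_{L^r(0,T;Y)}=\tau^{-1}\|\sigma_\tau u_\tau-u_\tau\|_{L^r(0,T-\tau;Y)}\le C$, and applies Proposition~\ref{prop.dub} (the derivative version of Theorem~\ref{thm.dub}) to get $\tilde u_{\tau''}\to u$ in $C^0([0,T];B)$; the $L^q$ convergence of $u_{\tau''}$ is then read off from $\|u_\tau-\tilde u_\tau\|_{L^1(0,T;B)}\le C\tau$ and interpolation. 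You instead avoid Rothe functions entirely: you apply Theorem~\ref{thm.dub} with exponent $1$, upgrade to $L^q$ by the $L^\infty(0,T;B)$ bound, and recover $u\in C^0([0,T];B)$ via the Ehrling-type Lemma~\ref{lem.dub} applied pointwise to $u_{\tau_n}(t),\,u_{\tau_n}(s)$ together with your Hölder modulus $\|u_{\tau_n}(t)-u_{\tau_n}(s)\|_Y\le C(|t-s|+\tau_n)^{(r-1)/r}$. This gives, for almost every $s,t$, $\|u(t)-u(s)\|_B\le 2C\varepsilon+C_\varepsilon C|t-s|^{(r-1)/r}$ for every $\varepsilon>0$, whence uniform continuity on a set of full measure. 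The paper's route is shorter and conceptually clean (the Rothe functions are made exactly so that Proposition~\ref{prop.dub} applies); your route is more self-contained, needing neither the auxiliary interpolants nor Proposition~\ref{prop.dub}. One remark: your phrase ``equicontinuous off an arbitrarily small set of times'' is misleading---the $u_{\tau_n}$ have jumps of uncontrolled $B$-size at every breakpoint---but the actual argument (Ehrling inequality plus passage to the a.e.\ limit) does not need any exceptional set and should be stated directly as above.
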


This result generalizes slightly Theorem 1 in \cite{DrJu12} and
Theorem 3 in \cite{ChLi12} (for piecewise constant functions in time). The proof in
\cite{DrJu12} is {also} based on a characterization of the norm of Sobolev-Slobodeckii
spaces. Our proof just uses elementary estimates for the difference
$\sigma_\tau u_\tau-u_\tau$ and thus simplifies the proof in \cite{DrJu12}.
Note that Theorems \ref{thm.dub} and \ref{thm.dub.pw} are also valid
if $M_+$ is replaced by a seminormed cone or Banach space.
We observe that for functions $u_\tau(t,\cdot)=u_k$ for $t\in((k-1)\tau,k\tau]$,
$1\le k\le N$, the estimate of (iv) can be formulated in terms of the
difference $u_{k+1}-u_k$ since
$$
  \|\sigma_\tau u_\tau-u_\tau\|^r_{L^r(0,T-\tau;B)}
	= \sum^{N-1}_{k=1}\int_{(k-1)\tau}^{{k\tau}}\|u_{k+1}-u_k\|^r_{B}dt
  = \tau\sum^{N-1}_{k=1}\|u_{k+1}-u_k\|^r_{B}.
$$

A typical application is the cone of nonnegative functions $u$
with $u^m\in W^{1,q}(\Omega)$, which occurs in diffusion equations
involving a porous-medium or fast-diffusion term.
Applying Theorem \ref{thm.dub.pw}, we obtain the following result.

\begin{theorem}\label{thm.app}
Let $\Omega\subset\R^d$ $(d\ge 1)$ be a bounded domain with $\pa\Omega\in C^{0,1}$.
Let $(u_\tau)$ be a sequence of nonnegative
functions which are constant on each subinterval $((k-1)\tau,k\tau]$,
$1\le k\le N$, $T=N\tau$.
Furthermore, let
$0<m<\infty$, $\gamma\ge 0$, $1\le q<\infty$, and $p\ge\max\{1,\frac{1}{m}\}$.
\begin{enumerate}[{\rm (a)}]
\item If there exists $C>0$ such that for all $\tau>0$,
$$
  \|\sigma_\tau u_\tau-u_\tau\|_{L^1(0,T-\tau;(H^\gamma(\Omega))')}
	+ \|u_\tau^m\|_{L^p(0,T;W^{1,q}(\Omega))} \le C,
$$
then $(u_\tau)$ is relatively compact in $L^{mp}(0,T;L^{mr}(\Omega))$,
where $r\ge \frac{1}{m}$ is such that
$W^{1,q}(\Omega)$ $\hookrightarrow L^r(\Omega)$ is compact.
\item If additionally $\max\{0,(d-q)/(dq)\}<m<1+\min\{0,(d-q)/(dq)\}$ and
\begin{equation}\label{ulogu}
  \|u_\tau\log u_\tau\|_{L^\infty(0,T;L^1(\Omega))} \le C
\end{equation}
for some $C>0$ independent of $\tau>0$, then $(u_\tau)$ is relatively compact
in $L^p(0,T;L^s(\Omega))$ with $s=qd/(qd(1-m)+d-q)>1$.
\end{enumerate}
\end{theorem}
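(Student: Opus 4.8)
The plan is to apply Theorem \ref{thm.dub.pw} with the seminormed nonnegative cone $M_+ = \{u \ge 0 : u^m \in W^{1,q}(\Omega)\}$ equipped with the functional $[u] = \|u^m\|_{W^{1,q}(\Omega)}^{1/m}$, so that $[cu] = c[u]$ and $[u]=0$ iff $u=0$. One checks $M_+$ embeds continuously into $B := L^{mr}(\Omega)$ and — crucially — compactly: if $(v_n) \subset M_+$ is bounded, then $(v_n^m)$ is bounded in $W^{1,q}(\Omega)$, hence precompact in $L^r(\Omega)$ by the Rellich-Kondrachov choice of $r$; passing to a subsequence $v_n^m \to g$ in $L^r(\Omega)$ and a.e., so $v_n \to g^{1/m}$ a.e., and a uniform integrability / dominated-convergence argument (using the $L^r$ convergence of $v_n^m$, i.e. $L^{mr}$-boundedness of $v_n$ and equi-integrability of $v_n^{mr}$) upgrades this to convergence in $L^{mr}(\Omega)$. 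This gives hypothesis (i). For (ii), take $Y := (H^\gamma(\Omega))'$; since both $B = L^{mr}(\Omega)$ and $Y$ embed continuously into $\mathcal{D}'(\Omega)$, if $w_n \to w$ in $B$ and $w_n \to 0$ in $Y$ then $w = 0$ in $\mathcal{D}'(\Omega)$, hence $w=0$. Hypotheses (iii) and (iv) are exactly the two bounds in the statement of part (a), read through $p \ge \max\{1,1/m\}$ (which guarantees $u_\tau \in L^p(0,T;M_+)$ since $\|u_\tau\|_{L^p(0,T;M_+)}^m = \|u_\tau^m\|_{L^{mp}(0,T;W^{1,q})}$ and $mp \ge 1$ makes this meaningful; actually one wants $u_\tau^m \in L^p(0,T;W^{1,q})$, so relabel so the hypothesis reads $\|u_\tau^m\|_{L^p}$ with the conclusion in $L^{mp}$). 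Theorem \ref{thm.dub.pw} then yields relative compactness of $(u_\tau)$ in $L^{mp}(0,T;L^{mr}(\Omega))$, which is part (a).

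For part (b), the strategy is to improve the target integrability exponent in space by interpolation, using the extra bound \eqref{ulogu}. The condition $\max\{0,(d-q)/(dq)\} < m$ ensures that, with $mr$ chosen suitably, we already control $u_\tau$ in $L^{ms}$-type norms via part (a) for the borderline Sobolev exponent; the condition $m < 1 + \min\{0,(d-q)/(dq)\}$ ensures $s = qd/(qd(1-m)+d-q) > 1$ is finite and the relevant Lebesgue exponents are ordered correctly. Concretely: by part (a), $(u_\tau)$ is relatively compact (hence, along a subsequence, strongly convergent and a.e. convergent after extraction) in $L^p(0,T;L^{m r}(\Omega))$ for any admissible $r$; in particular we may pick $r$ so that $mr$ is the critical Sobolev exponent $q^* = qd/(d-q)$ when $q<d$ (or arbitrary large when $q \ge d$). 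The $L^\infty(0,T;L^1)$ bound on $u_\tau \log u_\tau$ gives, by the de la Vallée-Poussin criterion, equi-integrability of $(u_\tau)$ in space uniformly in $t$ and $\tau$; combined with a.e. convergence (Vitali), the subsequence converges strongly in $L^p(0,T;L^\rho(\Omega))$ for every $\rho < $ (the exponent coming from part (a)). One then checks $s$ lies strictly between $1$ and that exponent, so the convergence holds in $L^p(0,T;L^s(\Omega))$; since the original family is bounded there, relative compactness follows.

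The main obstacle I expect is not the abstract application of Theorem \ref{thm.dub.pw} — that is essentially bookkeeping — but rather the two "upgrade" steps where a.e. convergence of $u_\tau$ (obtained from $W^{1,q} \hookrightarrow\hookrightarrow L^r$ applied to $u_\tau^m$, or from part (a)) must be promoted to strong $L^{mr}$- respectively $L^s$-convergence. This requires a careful equi-integrability argument: in part (a) one needs that a bounded sequence in $W^{1,q}(\Omega)$ mapped through $t \mapsto t^{1/m}$ stays equi-integrable in $L^{mr}$, which is where the compact Sobolev embedding (not merely continuous) is used, and in part (b) one needs the $u \log u$ bound to supply equi-integrability at the exponent $s$ — verifying that the arithmetic constraints on $m$, $q$, $d$ make $s$ land in the correct open interval is the delicate point. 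A secondary technical nuisance is the $p = \infty$ borderline in $p \ge \max\{1,1/m\}$, handled exactly as the $p=\infty$ case of Theorem \ref{thm.dub.pw} (convergence in $L^q(0,T;B)$ for all finite $q$).
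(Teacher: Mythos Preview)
Your plan for part (a) is essentially the paper's proof: same cone $M_+$, same seminorm $[u]=\|u^m\|_{W^{1,q}}^{1/m}$, same verification of (i)--(iv) in Theorem~\ref{thm.dub.pw}. The only cosmetic difference is that the paper upgrades a.e.\ convergence of $v_n$ to $L^{mr}$ convergence via the Brezis--Lieb theorem (norms converge, hence strong convergence), whereas you invoke equi-integrability/Vitali. Either route works.

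Part (b), however, has a genuine gap concerning the \emph{time} integrability. Part (a) yields compactness in $L^{mp}(0,T;L^{mr}(\Omega))$, and since $m<1$ in the regime of (b), $mp<p$; you must explain how to reach $L^p$ in time. Your sketch says de la Vall\'ee-Poussin plus Vitali give strong convergence in $L^p(0,T;L^\rho(\Omega))$ for every $\rho$ below ``the exponent from (a)'', and then observes $s$ lies below that exponent. But the equi-integrability supplied by the $u\log u$ bound is equi-integrability of $(u_\tau)$ in $L^1(\Omega)$ only, so Vitali gives $L^1(\Omega)$ convergence (a.e.\ in $t$, or in $L^q(0,T;L^1)$ for finite $q$ after dominated convergence), not $L^\rho$ convergence for $\rho>1$. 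If one then interpolates $L^{q}(0,T;L^1)$ convergence against $L^{mp}(0,T;L^{mr})$ boundedness, a short H\"older computation shows you obtain $L^p(0,T;L^\rho)$ convergence only for $\rho<s$, not $\rho=s$; hitting $\rho=s$ exactly forces the interpolation weight on the $L^1$ factor to be $1-m$, which requires that factor to be controlled in $L^\infty(0,T;L^1)$. So ``$s$ lies in the right open interval'' is not the issue; the missing ingredient is quantitative.

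The paper closes this gap as follows. From a.e.\ convergence and the $u\log u$ bound it first obtains $u_\tau\to u$ in $L^\infty(0,T;L^1(\Omega))$ (this is not standard Vitali and is imported from \cite{HiJu11}). The elementary inequality $|a-b|^{1/m}\le|a^{1/m}-b^{1/m}|$ then transfers this to $u_\tau^m\to u^m$ in $L^\infty(0,T;L^{1/m}(\Omega))$. Gagliardo--Nirenberg, applied to $u_\tau^m-u^m$ with interpolation parameter exactly $m$ between $W^{1,q}$ and $L^{1/m}$, yields
\[
\|u_\tau^m-u^m\|_{L^{p/m}(0,T;L^{s/m}(\Omega))}
\le C\|u_\tau^m-u^m\|_{L^p(0,T;W^{1,q})}^{m}\,
\|u_\tau^m-u^m\|_{L^\infty(0,T;L^{1/m})}^{1-m}\to 0,
\]
and this is precisely where the time exponent jumps from $p$ (on $u_\tau^m$) to $p/m$, i.e.\ from $mp$ to $p$ at the level of $u_\tau$. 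A final H\"older step using $|a-b|\le m^{-1}(a^{1-m}+b^{1-m})|a^m-b^m|$ converts this back to $u_\tau\to u$ in $L^p(0,T;L^s(\Omega))$. Your plan should incorporate either this Gagliardo--Nirenberg step on $u_\tau^m$ or, equivalently, the $L^\infty(0,T;L^1)$ convergence of $u_\tau$ interpolated against the $L^{mp}(0,T;L^{mq^*})$ bound with weights $(1-m,m)$; without one of these, the argument stops at $L^p(0,T;L^\rho)$ for $\rho<s$.
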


Part (a) of this theorem generalizes Lemma 2.3 in \cite{CHJ12},
in which only relative compactness in $L^{m\ell}(0,T;L^{mr}(\Omega))$
for $\ell < p$ and $q=2$ was shown. Part (b) improves part (a)
for $m<1$ by allowing for relative compactness in $L^p$ with respect to time
instead of the larger space $L^{mp}$. It generalizes Proposition 2.1
in \cite{HiJu11} in which $m=\frac12$ and $p=q=2$ was assumed. Its proof
shows that the bound on
$u_\tau\log u_\tau$ can be replaced by a bound on $\phi(u_\tau)$,
where $\phi$ is continuous and convex.

The additional estimate \eqref{ulogu} is typical for solutions of semidiscrete
nonlinear diffusion equations for which $\int_\Omega u_\tau\log u_\tau dx$
is an entropy (Lyapunov functional)
with $\int_\Omega|\na u_\tau^m|^2 dx$ as the corresponding entropy
production (see, e.g., \cite[Lemma 3.1]{CHJ12}). Theorem \ref{thm.app}
improves standard compactness arguments. Indeed, let $\frac{1}{m}\leq q<d$.
The additional estimate yields boundedness of $(u_\tau)$ in
$L^\infty(0,T;L^1(\Omega))$. Hence, $\na u_\tau = \frac{1}{m}u_\tau^{1-m}\na u_\tau^m$
is bounded in $L^{p}(0,T;L^\alpha(\Omega))$ with $\alpha=q/(1+q(1-m))$.
Thus, $(u_\tau)$ is bounded in $L^{p}(0,T;W^{1,\alpha}(\Omega))
\hookrightarrow L^{p}(0,T;L^s(\Omega))$.
By the Aubin-Lions lemma \cite{DrJu12}, $(u_\tau)$ is relatively compact in
$L^p(0,T;L^\beta(\Omega))$ for all $\beta<s$. Part (b) of the above theorem
improves this compactness to $\beta=s$ under the condition that $(u_\tau\log u_\tau)$
is bounded in $L^\infty(0,T;L^1(\Omega))$.

This note is organized as follows. In Section \ref{sec.pro}, Theorems
\ref{thm.dub}-\ref{thm.app} are proved. Section \ref{sec.coro}
is concerned with additional results.


\section{Proofs}\label{sec.pro}

\subsection{Proof of Theorem \ref{thm.dub}}

The proof of Theorem \ref{thm.dub} is based on the following
Ehrling type lemma.

\begin{lemma}\label{lem.dub}
Let $B$, $Y$ Banach spaces and $M_+$ be a seminormed nonnegative cone in $B$.
We assume that
\begin{enumerate}[{\rm (i)}]
\item $M_+\hookrightarrow B$ compactly.
\item For all $(w_n)\subset B$,
$w_n\to w$ in $B$, $w_n\to 0$ in $Y$ as $n\to\infty$ imply that $w=0$.
\end{enumerate}
Then for any $\eps>0$, there exists $C_\eps>0$ such that for all
$u$, $v\in M_+\cap Y$,
$$
  \|u-v\|_B \le \eps([u]+[v]) + C_\eps\|u-v\|_Y.
$$
\end{lemma}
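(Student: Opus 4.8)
The plan is to argue by contradiction, following the classical proof of Ehrling's lemma but exploiting the positive homogeneity of the seminorm $[\cdot]$ and of the Banach norms so that a normalization is available even though $M_+$ is not a vector space. Suppose the conclusion fails. Then there is $\eps_0>0$ such that for every $n\in\N$ one finds $u_n,v_n\in M_+\cap Y$ with
$$
  \|u_n-v_n\|_B > \eps_0([u_n]+[v_n]) + n\,\|u_n-v_n\|_Y .
$$
Since the right-hand side is nonnegative, $\|u_n-v_n\|_B>0$. Because $M_+$ is a cone with $[cu]=c[u]$, and $\|cu\|_B=c\|u\|_B$, $\|cu\|_Y=c\|u\|_Y$ for $c\ge0$, I may replace $u_n$ and $v_n$ by $u_n/\|u_n-v_n\|_B$ and $v_n/\|u_n-v_n\|_B$, which still belong to $M_+\cap Y$. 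After this normalization,
$$
  \|u_n-v_n\|_B = 1, \qquad 1 > \eps_0([u_n]+[v_n]) + n\,\|u_n-v_n\|_Y \quad\text{for all } n.
$$

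From the second relation I read off two facts: first, $[u_n]+[v_n]<1/\eps_0$, so $(u_n)$ and $(v_n)$ are bounded sequences in $M_+$; second, $\|u_n-v_n\|_Y<1/n\to0$, hence $u_n-v_n\to0$ in $Y$. By the compact embedding (i), after passing to a subsequence (not relabeled) there is $u\in B$ with $u_n\to u$ in $B$, and after a further subsequence there is $v\in B$ with $v_n\to v$ in $B$. Setting $w_n:=u_n-v_n$, we then have $w_n\to u-v$ in $B$ and $w_n\to0$ in $Y$, so hypothesis (ii) forces $u-v=0$.

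On the other hand, $\|u_n-v_n\|_B=1$ for every $n$ and $u_n-v_n\to u-v$ in $B$, hence $\|u-v\|_B=1$, contradicting $u=v$. This contradiction proves the lemma.

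I do not expect a serious obstacle: the only points needing care are that the normalization is legitimate — which is exactly where the positivity of the scalars and the cone structure enter, ensuring $u_n/\|u_n-v_n\|_B\in M_+\cap Y$ — and that the subsequences for $(u_n)$ and $(v_n)$ must be extracted successively so that both converge along one common subsequence. It is precisely the final step, where the usual continuous embedding $B\hookrightarrow Y$ would normally be used, that is instead carried out via the weaker hypothesis (ii), in line with the comparison of proof strategies described above.
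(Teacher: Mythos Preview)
Your proof is correct and follows essentially the same contradiction-and-normalization strategy as the paper's proof; the only difference is that you normalize by $\|u_n-v_n\|_B$ whereas the paper normalizes by $[u_n]+[v_n]$. Your choice is arguably a touch cleaner, since it yields $\|u_n-v_n\|_Y<1/n$ directly rather than via the continuous embedding $M_+\hookrightarrow B$, but the two arguments are otherwise identical in structure.
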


\begin{proof}
The proof is performed by contradiction.
Suppose that there exists $\eps_0>0$ such that for all $n\in\N$, there exist
$u_n$, $v_n\in M_+\cap Y$ such that
\begin{equation}\label{aux1}
  \|u_n-v_n\|_B > \eps_0([u_n]+[v_n]) + n\|u_n-v_n\|_Y.
\end{equation}
This implies that $[u_n]+[v_n]>0$ for all $n\in\N$ since otherwise,
$[u_m]=[v_m]=0$ for a certain $m\in\N$ would lead to $u_m=v_m=0$ which
contradicts \eqref{aux1}. Define
$$
  \tilde u_n = \frac{u_n}{[u_n]+[v_n]}, \quad \tilde v_n = \frac{v_n}{[u_n]+[v_n]}.
$$
Then $\tilde u_n$, $\tilde v_n\in M_+\cap Y$ and $[\tilde u_n]\le 1$,
$[\tilde v_n]\le 1$. Taking into account the compact embedding $M_+\hookrightarrow B$,
there exist subsequences of $(\tilde u_n$) and $(\tilde v_n)$, which are
not relabeled, such that $\tilde u_n\to u$ and $\tilde v_n\to v$ in B and hence,
\begin{equation}\label{aux2}
  \tilde u_n-\tilde v_n\to u-v \quad\mbox{in }B.
\end{equation}
We infer from \eqref{aux1} that $\|\tilde u_n-\tilde v_n\|_B
> \eps_0+n\|\tilde u_n-\tilde v_n\|_Y$. This shows, on the one hand, that
$\|\tilde u_n-\tilde v_n\|_B>\eps_0$ and, by \eqref{aux2}, $\|u-v\|_{B}>\eps_0$.
On the other hand, using the continuous embedding $M_+\hookrightarrow B$,
$$
  \|\tilde u_n-\tilde v_n\|_Y \le \frac{1}{n}\|\tilde u_n-\tilde v_n\|_B
	\le \frac{C}{n}([\tilde u_n]+[\tilde v_n]) \le \frac{2C}{n}.
$$
Consequently, $\tilde u_n-\tilde v_n\to 0$ in $Y$. Together with \eqref{aux2},
condition (ii) implies that $u-v=0$, contradicting $\|u-v\|>\eps_0$.
\end{proof}

\begin{proof}[Proof of Theorem \ref{thm.dub}]
First, we prove that
\begin{equation}\label{aux3}
  \|\sigma_h u-u\|_{L^p(0,T-h;B)}\to 0\quad\mbox{as }h\to 0, \quad
	\mbox{uniformly in }u\in U.
\end{equation}
Indeed, by condition (iii), there exists $C>0$ such that $\|u\|_{L^p(0,T;M_+)}\le C$
for all $u\in U$.
Lemma \ref{lem.dub} shows that for any $\eps>0$, there exists $C_\eps>0$ such that
for all $0<h<T$, $u\in U$, and $t\in[0,T-h]$,
$$
  \|u(t+h)-u(t)\|_B \le \frac{\eps}{4C}\big([u(t+h)]+[u(t)]\big)
	+ C_\eps\|u(t+h)-u(t)\|_Y.
$$
Integration over $t\in(0,T-h)$ then gives
\begin{align*}
  \|\sigma_h u-u\|_{L^p(0,T-h;B)}
	&\le \frac{\eps}{2C}\|u\|_{L^p(0,T;M_+)} + C_\eps\|\sigma_h u-u\|_{L^p(0,T-h;Y)} \\
	&\le \frac{\eps}{2} + C_\eps\|\sigma_h u-u\|_{L^p(0,T-h;Y)}.
\end{align*}
We deduce from condition (iv) that for $\eps_1=\eps/(2C_{\eps})$,
there exists $\eta>0$ such that for all $0<h<\eta$ and $u\in U$,
$\|\sigma_h u-u\|_{L^p(0,T-h;Y)}\le\eps_1$. This shows that
$\|\sigma_h u-u\|_{L^p(0,T-h;B)}\le\eps/2+\eps/2=\eps$, proving the claim.

Because of condition (iii) and \eqref{aux3}, the assumptions
of Lemma 6 in \cite{ChLi12} are satisfied, and the desired compactness result
follows. In Lemma 6, only the (compact) embedding $M_+\hookrightarrow B$ is needed.
Let us mention that this lemma is a consequence of a nonlinear
Maitre-type compactness result \cite[Theorem 1]{ChLi12}
(see Proposition \ref{prop.maitre}), which itself uses Theorem 1 in \cite{Sim87}.
\end{proof}


\subsection{Proof of Theorem \ref{thm.dub.pw}}

The proof of Theorem \ref{thm.dub.pw} is based on an estimate of
the time shifts $\sigma_h u_\tau-u_\tau$.

\begin{lemma}\label{lem.pw}
Let $1\le p\le\infty$ and let $u_\tau\in L^p(0,T;B)$ be piecewise constant
in time, i.e., $u_\tau(t)=u_k$ for $(k-1)\tau<t\le k\tau$, $k=1,\ldots,N$, $T=N\tau$.
Then, for $0<h<T$,
$$
  \|\sigma_h u_\tau-u_\tau\|_{L^p(0,T-h;B)} \le h^{1/p}\sum_{k=1}^{N-1}
	\|u_{k+1}-u_k\|_B
	= \frac{h^{1/p}}{\tau}\|\sigma_\tau u_\tau-u_\tau\|_{L^1(0,T-\tau;B)}.
$$
\end{lemma}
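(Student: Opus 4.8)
The plan is to work directly from an explicit telescoping representation of $u_\tau$ and then apply the triangle inequality twice. Writing $u_k=u_1+\sum_{i=1}^{k-1}(u_{i+1}-u_i)$, one checks that for a.e.\ $s\in(0,T]$
$$
  u_\tau(s)=u_1+\sum_{i=1}^{N-1}(u_{i+1}-u_i)\,\mathbbm{1}_{\{s>i\tau\}},
$$
since $s\in((k-1)\tau,k\tau]$ makes precisely the indices $i\le k-1$ contribute, so the sum collapses to $u_k$. Subtracting this identity at $s=t+h$ and $s=t$, and using that $h>0$ gives $\mathbbm{1}_{\{t+h>i\tau\}}-\mathbbm{1}_{\{t>i\tau\}}=\mathbbm{1}_{(i\tau-h,\,i\tau]}(t)$, one obtains for a.e.\ $t\in(0,T-h)$
$$
  u_\tau(t+h)-u_\tau(t)=\sum_{i=1}^{N-1}(u_{i+1}-u_i)\,\mathbbm{1}_{(i\tau-h,\,i\tau]}(t).
$$

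I then estimate the two norms in turn. The triangle inequality in $B$ yields $\|u_\tau(t+h)-u_\tau(t)\|_B\le\sum_{i=1}^{N-1}\|u_{i+1}-u_i\|_B\,\mathbbm{1}_{(i\tau-h,i\tau]}(t)$, and Minkowski's inequality in $L^p(0,T-h)$, which is valid for every $1\le p\le\infty$, gives
$$
  \|\sigma_h u_\tau-u_\tau\|_{L^p(0,T-h;B)}
  \le\sum_{i=1}^{N-1}\|u_{i+1}-u_i\|_B\,\big\|\mathbbm{1}_{(i\tau-h,\,i\tau]}\big\|_{L^p(0,T-h)}.
$$
Each interval $(i\tau-h,i\tau]$ has length $h$, so its measure inside $(0,T-h)$ is at most $h$, whence $\|\mathbbm{1}_{(i\tau-h,i\tau]}\|_{L^p(0,T-h)}\le h^{1/p}$ (with the usual convention $h^{1/\infty}=1$ when $p=\infty$). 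This proves the first inequality. The asserted equality is then immediate from the elementary identity recorded just before the statement, namely $\|\sigma_\tau u_\tau-u_\tau\|_{L^1(0,T-\tau;B)}=\tau\sum_{k=1}^{N-1}\|u_{k+1}-u_k\|_B$, which rewrites $h^{1/p}\sum_{k=1}^{N-1}\|u_{k+1}-u_k\|_B$ as $\frac{h^{1/p}}{\tau}\|\sigma_\tau u_\tau-u_\tau\|_{L^1(0,T-\tau;B)}$.

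The only mild subtlety is that for $h>\tau$ the intervals $(i\tau-h,i\tau]$ overlap; but the argument never uses their disjointness — Minkowski's inequality is applied to the sum of indicator functions regardless of overlaps — so this causes no difficulty, and the endpoint $p=\infty$ is handled in the same way with $h^{1/p}$ read as $1$. I do not expect any genuine obstacle here: the statement is elementary, and essentially all of the (minimal) work lies in writing the telescoping identity cleanly; once it is in place, two applications of the triangle inequality finish the proof.
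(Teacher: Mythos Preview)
Your proof is correct and follows essentially the same route as the paper: the telescoping representation via $\mathbbm{1}_{\{s>i\tau\}}$ is exactly the Heaviside identity $u_\tau(t)=u_1+\sum_{k=1}^{N-1}(u_{k+1}-u_k)H(t-k\tau)$ used there, and your subsequent double application of the triangle/Minkowski inequality together with the bound $\|\mathbbm{1}_{(i\tau-h,i\tau]}\|_{L^p(0,T-h)}\le h^{1/p}$ mirrors the paper's computation of $\|H(\cdot+h-k\tau)-H(\cdot-k\tau)\|_{L^p(0,T-h)}$. The only differences are notational, and your explicit remark that overlapping intervals cause no trouble is a helpful clarification not spelled out in the original.
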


\begin{proof}
Denoting by $H$ the Heaviside functions, defined by $H(t)=0$ for $t\le 0$ and
$H(t)=1$ for $t>0$, we find that
$$
  u_\tau(t) = u_1 + \sum_{k=1}^{N-1}(u_{k+1}-u_k)H(t-k\tau), \quad 0<t<T.
$$
This gives
$$
  u_\tau(t+h)-u_\tau(t) = \sum_{k=1}^{N-1}(u_{k+1}-u_k)\big(H(t+h-k\tau)-H(t-k\tau)\big),
	\quad 0<t<T-h,
$$
and
\begin{equation}\label{aux4}
  \|\sigma_\tau u_\tau-u_\tau\|_{L^p(0,T-h;B)}
	\le \sum_{k=1}^{N-1}\|u_{k+1}-u_k\|_B\|H(t+h-k\tau)-H(t-k\tau)\|_{L^p(0,T-h)}.
\end{equation}
If $1\le p<\infty$, we have for $1\le k\le N-1$,
\begin{align*}
  \|H(t&+h-k\tau)-H(t-k\tau)\|_{L^p(0,T-h)}^p
	\le \int_{-\infty}^\infty\big|H(t+h-k\tau)-H(t-k\tau)\big|^p dt \\
	&= \left(\int_{-\infty}^{k\tau-h}+\int_{k\tau-h}^{k\tau}+\int_{k\tau}^\infty\right)
	\big|H(t+h-k\tau)-H(t-k\tau)\big|^p dt
	= \int_{k\tau-h}^{k\tau} dt = h.
\end{align*}
If $p=\infty$, we infer that
\begin{align*}
  \|H(t+h-k\tau)-H(t-k\tau)\|_{L^\infty(0,T-h)}
	&\le \|H(t+h-k\tau)-H(t-k\tau)\|_{L^\infty(\R)} \\
	&= \|H(t+h-k\tau)-H(t-k\tau)\|_{L^\infty(k\tau-h,k\tau)} = 1.
\end{align*}
Hence,
$$
  \|H(t+h-k\tau)-H(t-k\tau)\|_{L^\infty(0,T-h)} \le h^{1/p}, \quad 1\le p\le\infty.
$$
Together with \eqref{aux4}, this finishes the proof.
\end{proof}

Theorem \ref{thm.dub} above and Lemma 4 in \cite{Sim87} imply the following proposition
involving the time derivative instead of the time shifts.

\begin{proposition}\label{prop.dub}
Let $B$, $Y$ be Banach spaces and $M_+$ be a seminormed nonnegative cone in $B$.
Let either $1\le p<\infty$, $r=1$ or $p=\infty$, $r>1$.
Assume that conditions {\rm (i)-(iii)} of Theorem \ref{thm.dub} hold and
$$
  \frac{\pa U}{\pa t}\mbox{ is bounded in }L^r(0,T;Y).
$$
Then $U$ is relatively compact in $L^p(0,T;B)$ (and in $C^0([0,T];B)$ if $p=\infty$).
\end{proposition}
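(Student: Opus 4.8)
The plan is to derive the proposition directly from Theorem \ref{thm.dub} by verifying its four hypotheses for the set $U$. Conditions (i)--(iii) are assumed outright, so the whole matter reduces to checking condition (iv): that $\|\sigma_h u-u\|_{L^p(0,T-h;Y)}\to 0$ as $h\to 0$, uniformly in $u\in U$. The only information available for this is the boundedness of $\pa U/\pa t$ in $L^r(0,T;Y)$.

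The key step is to convert that derivative bound into a uniform time-shift bound, which is the content of Lemma 4 in \cite{Sim87}. Spelling out the mechanism: for $u\in U$ one has the standard identity $u(t+h)-u(t)=\int_t^{t+h}\pa_s u(s)\,ds$ in $Y$, hence $\|u(t+h)-u(t)\|_Y\le\int_t^{t+h}\|\pa_s u(s)\|_Y\,ds$. If $r=1$, integrating in $t$ and using Fubini gives $\|\sigma_h u-u\|_{L^1(0,T-h;Y)}\le h\,\|\pa_t u\|_{L^1(0,T;Y)}$, while estimating the inner integral crudely gives $\|\sigma_h u-u\|_{L^\infty(0,T-h;Y)}\le\|\pa_t u\|_{L^1(0,T;Y)}$; the interpolation inequality $\|\cdot\|_{L^p}\le\|\cdot\|_{L^1}^{1/p}\|\cdot\|_{L^\infty}^{1-1/p}$ then yields $\|\sigma_h u-u\|_{L^p(0,T-h;Y)}\le h^{1/p}\|\pa_t u\|_{L^1(0,T;Y)}$ for all $1\le p<\infty$. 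In the remaining case $p=\infty$, $r>1$, H\"older's inequality on the interval $(t,t+h)$ gives directly $\|\sigma_h u-u\|_{L^\infty(0,T-h;Y)}\le h^{1-1/r}\|\pa_t u\|_{L^r(0,T;Y)}$. In either case the assumed $L^r(0,T;Y)$-bound on $\pa U/\pa t$ forces the right-hand side to $0$ as $h\to 0$, uniformly in $u\in U$, which is exactly condition (iv).

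Once conditions (i)--(iv) are in hand, Theorem \ref{thm.dub} yields the relative compactness of $U$ in $L^p(0,T;B)$, and in $C^0([0,T];B)$ when $p=\infty$. The argument is short and I do not anticipate a genuine obstacle; the one point requiring care is the case split in the middle step, where the hypothesis that either $1\le p<\infty$ and $r=1$, or $p=\infty$ and $r>1$, is essential. Indeed, for $p=\infty$ smallness cannot be extracted from $\pa_t u\in L^1(0,T;Y)$ alone, since the time shifts are then merely uniformly bounded in $L^\infty(0,T-h;Y)$; the assumption $r>1$ is what produces the vanishing factor $h^{1-1/r}$. For finite $p$, by contrast, the interpolation absorbs any $r\ge 1$, so it suffices to take $r=1$.
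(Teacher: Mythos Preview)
Your proposal is correct and follows exactly the route the paper takes: the paper states that the proposition is an immediate consequence of Theorem~\ref{thm.dub} together with Lemma~4 in \cite{Sim87}, and you have simply unpacked that lemma to verify condition~(iv). The case split you carry out is precisely the content of Simon's Lemma~4.
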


\begin{proof}[Proof of Theorem \ref{thm.dub.pw}]
The case $1\le p<\infty$ is a consequence of Theorem \ref{thm.dub} and
Lemma \ref{lem.pw}. Therefore, let $p=\infty$.
We define the linear interpolations
$$
  \tilde u_\tau(t) = \left\{\begin{array}{ll}
	u_1 & \mbox{if }0<t\le \tau, \\
	u_k - (k\tau-t)(u_k-u_{k-1})/\tau & \mbox{if }(k-1)\tau<t\le k\tau,\
	2\le k\le N.
	\end{array}\right.
$$
Since $(k\tau-t)/\tau\le 1$ for $(k-1)\tau < t\le k\tau$, we have
$$
  \|\tilde u_\tau\|_{L^\infty(0,T;M_+)} \le 2\|u_\tau\|_{L^\infty(0,T;M_+)}
	\le C,
$$
using condition (iii). Furthermore, by condition (iv),
$$
  \left\|\frac{\pa \tilde u_\tau}{\pa t}\right\|_{L^r(0,T;Y)}
	= \frac{1}{\tau}\|\sigma_\tau u_\tau-u_\tau\|_{L^r(0,T-\tau;Y)} \le C.
$$
By Proposition \ref{prop.dub}, there exists a subsequence
$(\tilde u_{\tau'})$ of $(\tilde u_\tau)$ such that $\tilde u_{\tau'}\to \tilde u$
in $C^0([0,T];B)$ (and $\tilde u \in C^0([0,T];B)$). Applying Theorem
\ref{thm.dub.pw} with $p=1$ and $r=1$, there exists a subsequence
$(u_{\tau''})$ of $(u_{\tau'})$ such that $u_{\tau''}\to u$ in $L^1(0,T;B)$. Since
$$
  \|\tilde u_\tau-u_{\tau}\|_{L^1(0,T;B)}
	\le \|\sigma_\tau u_\tau-u_\tau\|_{L^1(0,T-\tau;B)} \le C\tau,
$$
it follows that $(\tilde u_{\tau''})$ and $(u_{\tau''})$ converge to the same
limit, implying that $\tilde u=u$. By the boundedness of $(u_\tau)$ in
$L^\infty(0,T;M_+)\subset L^\infty(0,T;B)$ and interpolation, we infer that
for $1\le q<\infty$, as $\tau\to 0$,
$$
  \|u_{\tau''}-u\|_{L^q(0,T;B)}
	\le \|u_{\tau''}-u\|_{L^1(0,T;B)}^{1/q}\|u_{\tau''}-u\|_{L^\infty(0,T;B)}^{1-1/q}
	\le C\|u_{\tau''}-u\|_{L^1(0,T;B)}^{1/q} \to 0.
$$
This shows that a subsequence of $(u_\tau)$ converges in $L^q(0,T;B)$
to a limit function $u\in C^0([0,T];$ $B)$.
\end{proof}


\subsection{Proof of Theorem \ref{thm.app}}\label{sec.app}

(a) We apply Theorem \ref{thm.dub.pw} to $B=L^{mr}(\Omega)$, $Y=(H^s(\Omega))'$,
and $M_+=\{u\ge 0:u^m\in W^{1,q}(\Omega)\}$ with $[u]=\|u^m\|_{W^{1,q}(\Omega)}^{1/m}$
for $u\in M_+$. Then $M_+$ is a seminormed nonnegative cone in $B$.
We claim that $M_+\hookrightarrow B$ compactly.
Indeed, it follows from the continuous embedding
$W^{1,q}(\Omega)\hookrightarrow L^r(\Omega)$ that for any $u\in M_+$,
$$\|u\|_{L^{mr}(\Omega)}=\|u^m\|^\frac{1}{m}_{L^{r}(\Omega)}\le C\|u^m\|^\frac{1}{m}_{W^{1,q}(\Omega)}=C[u].$$
Then $M_+\hookrightarrow B$ continuously. Let $(v_n)$ be bounded in $M_+$.
Then $(v_n^m)$ is bounded in $W^{1,q}(\Omega)$. Since $W^{1,q}(\Omega)$
embeddes compactly into $L^r(\Omega)$, up to a subsequence which is not relabeled,
$v_n^m\to z$ in $L^r(\Omega)$ with $z\ge 0$.
Again up to a subsequence, $v_n^m\to z$ a.e.\ and $v_n\to v:=z^{1/m}$ a.e. Hence $v_n^m\to v^m$ in $L^r(\Omega)$ which yields
$$
  \lim_{n\rightarrow\infty}\|v_n\|_{L^{mr}(\Omega)} = \lim_{n\rightarrow\infty}\|v_n^m\|_{L^r(\Omega)}^{1/m}
	=\|v^m\|_{L^r(\Omega)}^{1/m}=\|v\|_{L^{mr}(\Omega)}.
$$
Then it follows from Brezis-Lieb theorem (see \cite[p.~298, 4.7.30]{b07} or
\cite{bl83}) that $v_n\to v$ in $L^{mr}(\Omega)$
(for a subsequence). This proves the claim.
Next, let $w_n\to w$ in $L^{mr}(\Omega)$ and $w_n\to 0$ in $(H^\gamma(\Omega))'$.
Since $L^{mr}(\Omega)\hookrightarrow{\mathcal D}'(\Omega)$ and
$(H^\gamma(\Omega))'\hookrightarrow{\mathcal D}'(\Omega)$, the convergences
hold true in ${\mathcal D}'(\Omega)$ which gives $w=0$.
Furthermore, the following bound holds:
$$
  \|u_\tau\|_{L^{mp}(0,T;M_+)} = \|u_\tau^m\|_{L^p(0,T;W^{1,q}(\Omega))}^{1/m} \le C.
$$
By Theorem \ref{thm.dub.pw}, $(u_\tau)$ is relatively compact in
$L^{mp}(0,T;L^{mr}(\Omega))$.

(b) Note that the condition $\max\{0,(d-q)/(dq)\}<m<1+\min\{0,(d-q)/(dq)\}$
ensures that $s > 1$. By the first part of the proof, up to a subsequence,
$u_\tau\to u$ a.e. It is shown in the proof
of Proposition 2.1 in \cite{HiJu11} that this convergence and \eqref{ulogu}
imply that $u_\tau\to u$ in $L^\infty(0,T;L^1(\Omega))$. We infer from
the elementary inequality $|a-b|^{1/m}\le|a^{1/m}-b^{1/m}|$ for all $a$, $b\ge 0$
that
$$
  \|u_\tau^m-u^m\|_{L^\infty(0,T;L^{1/m}(\Omega))}
	\le \|u_\tau-u\|_{L^\infty(0,T;L^1(\Omega))}\to 0\quad\mbox{as }\tau\to 0.
$$
Then the Gagliardo-Nirenberg inequality gives
\begin{align*}
  \|u_\tau^m-u^m\|_{L^{p/m}(0,T;L^{s/m}(\Omega))}
	&\le C\|u_\tau^m-u^m\|_{L^p(0,T;W^{1,q}(\Omega))}^{m}
	\|u_\tau^m-u^m\|_{L^\infty(0,T;L^{1/m}(\Omega))}^{1-m} \\
	&\le C\|u_\tau^m-u^m\|_{L^\infty(0,T;L^{1/m}(\Omega))}^{1-m}\to 0.
\end{align*}
In particular, we infer that
$$
  \|u_\tau\|_{L^p(0,T;L^s(\Omega))}^m = \|u_\tau^m\|_{L^{p/m}(0,T;L^{s/m}(\Omega))}
	\le C.
$$
Furthermore, by the mean-value theorem,
$|a-b|\le \frac{1}{m}(a^{1-m}+b^{1-m})|a^m-b^m|$ for all $a$, $b\ge 0$, which
yields, together with the H\"older inequality,
\begin{align*}
  \|u_\tau-u\|_{L^p(0,T;L^s(\Omega))}
	&\le C\big(\|u_\tau\|_{L^p(0,T;L^s(\Omega))}^{1-m}
	+\|u\|_{L^p(0,T;L^s(\Omega))}^{1-m}\big)
	\|u_\tau^m-u^m\|_{L^{p/m}(0,T;L^{s/m}(\Omega))} \\
	&\le C\|u_\tau^m-u^m\|_{L^{p/m}(0,T;L^{s/m}(\Omega))}\to 0.
\end{align*}
This proves the theorem.


\section{Additional results}\label{sec.coro}

Using Lemma \ref{lem.pw}, we can specify Maitre's nonlinear compactness
result and Aubin-Lions lemma with intermediate spaces assumption for piecewise constant functions in time.

\begin{proposition}[Maitre nonlinear compactness]\label{prop.maitre}
Let either $1\le p<\infty$, $r=1$ or $p=\infty$, $r>1$.
Let $X$, $B$ be Banach spaces, and let $K:X\to B$ be a compact operator.
Furthermore, let $(v_\tau)\subset L^1(0,T;X)$ be a sequence of functions,
which are constant on each subinterval $((k-1)\tau,k\tau]$, $1\le k\le N$,
$T=N\tau$, and let $u_\tau=K(v_\tau)\in L^p(0,T;B)$. Assume that
\begin{enumerate}[{\rm (i)}]
\item $(v_\tau)$ is bounded in $L^1(0,T;X)$, $(u_\tau)$ is bounded in $L^1(0,T;B)$.
\item There exists $C>0$ such that for all $\tau>0$,
$\|\sigma_\tau u_\tau-u_\tau\|_{L^r(0,T-\tau;B)}\le C\tau$.
\end{enumerate}
Then, if $p<\infty$, $(u_\tau)$ is relatively compact in $L^p(0,T;B)$ and if
$p=\infty$, there exists a subsequence of $(u_\tau)$ converging in $L^q(0,T;B)$ for
all $1\le q<\infty$ to a limit function belonging to $C^0([0,T];B)$.
\end{proposition}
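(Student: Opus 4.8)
The plan is to deduce Proposition~\ref{prop.maitre} from the continuous-time nonlinear compactness lemma of Maitre \cite{Mai03} (equivalently, Theorem~1 in \cite{ChLi12}), using Lemma~\ref{lem.pw} to replace the single-step shift bound~(ii) by a uniform modulus of continuity, and handling the endpoint $p=\infty$ by the linear-interpolation device from the proof of Theorem~\ref{thm.dub.pw}. First let $1\le p<\infty$ and $r=1$. By Lemma~\ref{lem.pw} and hypothesis~(ii),
$$
  \|\sigma_h u_\tau-u_\tau\|_{L^p(0,T-h;B)}
  \le \frac{h^{1/p}}{\tau}\,\|\sigma_\tau u_\tau-u_\tau\|_{L^1(0,T-\tau;B)}
  \le C\,h^{1/p}\ \longrightarrow\ 0 \quad\text{as }h\to 0,
$$
uniformly in $\tau$. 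Together with hypothesis~(i) and the compactness of $K$, this is exactly the hypothesis set of the nonlinear compactness lemma of \cite{Mai03,ChLi12}, which gives relative compactness of $(u_\tau)$ in $L^p(0,T;B)$. (Alternatively, one checks the two conditions of the vector-valued Kolmogorov--Riesz criterion, Theorem~1 in \cite{Sim87}: the tightness condition is the estimate just obtained, and relative compactness of $\{\int_{t_1}^{t_2}u_\tau(s)\,ds\}$ in $B$ follows because $(t_2-t_1)^{-1}\int_{t_1}^{t_2}v_\tau(s)\,ds$ is bounded in $X$ and $K$ maps bounded sets into relatively compact ones.)

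For $p=\infty$ and $r>1$ I would argue as in the proof of Theorem~\ref{thm.dub.pw}. Introduce the linear-in-time interpolants $\tilde u_\tau$ of the nodal values $u_k=K(v_k)$; by hypothesis~(ii) one has $\|\pa\tilde u_\tau/\pa t\|_{L^r(0,T;B)}=\tau^{-1}\|\sigma_\tau u_\tau-u_\tau\|_{L^r(0,T-\tau;B)}\le C$. The family $(\tilde u_\tau)$ is relatively compact in $C^0([0,T];B)$: the bound on the time derivative in $L^r$ with $r>1$ yields the uniform equicontinuity $\|\tilde u_\tau(t)-\tilde u_\tau(s)\|_B\le C|t-s|^{1-1/r}$, while relative compactness in $B$ for each fixed $t$ follows from the compactness of $K$ applied to a slice $v_\tau(s)$ with $s$ close to $t$ and $\|v_\tau(s)\|_X$ controlled by Markov's inequality, so Arzel\`a--Ascoli applies. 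Hence a subsequence $\tilde u_{\tau'}\to\tilde u$ in $C^0([0,T];B)$. Applying the already-proved case $p=1$, $r=1$ (admissible because $\|\sigma_\tau u_\tau-u_\tau\|_{L^1(0,T-\tau;B)}\le T^{1-1/r}\|\sigma_\tau u_\tau-u_\tau\|_{L^r(0,T-\tau;B)}\le C\tau$ by H\"older's inequality) gives a further subsequence $u_{\tau''}\to u$ in $L^1(0,T;B)$, and $\|\tilde u_\tau-u_\tau\|_{L^1(0,T;B)}\le\|\sigma_\tau u_\tau-u_\tau\|_{L^1(0,T-\tau;B)}\le C\tau\to 0$ forces $\tilde u=u$. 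Finally, hypotheses~(i) and~(ii) imply that $(u_\tau)$ is bounded in $L^\infty(0,T;B)$ (the nodal values have total variation bounded uniformly in $\tau$), so the interpolation inequality $\|u_{\tau''}-u\|_{L^q(0,T;B)}\le\|u_{\tau''}-u\|_{L^1(0,T;B)}^{1/q}\|u_{\tau''}-u\|_{L^\infty(0,T;B)}^{1-1/q}$ yields $u_{\tau''}\to u$ in $L^q(0,T;B)$ for every $1\le q<\infty$, with $u\in C^0([0,T];B)$.

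The single new ingredient is Lemma~\ref{lem.pw}, which turns the one-step bound~(ii) into the modulus-of-continuity estimate required by the classical theory; the rest is assembly of \cite{Mai03,ChLi12} and the interpolation scheme of Theorem~\ref{thm.dub.pw}. I expect the delicate point to be the $p=\infty$ step: one must verify that the interpolants $\tilde u_\tau$ still enjoy the pointwise-in-$t$ relative compactness in $B$ coming from $K$ (immediate when $K$ is linear, since then $\tilde u_\tau=K(\tilde v_\tau)$ with $(\tilde v_\tau)$ bounded in $L^1(0,T;X)$, and otherwise requiring the Markov-selection argument above), and one must not overlook that the $L^\infty(0,T;B)$ bound used in the final interpolation is not assumed but must be extracted from~(i)--(ii).
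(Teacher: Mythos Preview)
Your proof is correct and, for $p<\infty$, coincides exactly with the paper's argument: Lemma~\ref{lem.pw} converts the single-step bound~(ii) into the uniform time-shift estimate, and then Theorem~1 in \cite{ChLi12} (Maitre's lemma) closes the argument.

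For $p=\infty$ the paper says only that ``the case $p=\infty$ is treated as in the proof of Theorem~\ref{thm.dub.pw}'', which in that setting invokes Proposition~\ref{prop.dub} on the linear interpolants. You instead establish the $C^0$ compactness of $(\tilde u_\tau)$ directly via Arzel\`a--Ascoli, deriving equicontinuity from the $L^r$ bound on $\partial_t\tilde u_\tau$ and pointwise relative compactness by your Markov-selection device (pick $s_\tau$ near $t$ with $\|v_\tau(s_\tau)\|_X$ controlled, so that $u_\tau(s_\tau)=K(v_\tau(s_\tau))$ lies in a fixed compact set, and use equicontinuity together with $\|\tilde u_\tau-u_\tau\|_{L^\infty(0,T;B)}\le C\tau^{1-1/r}$ to transfer this to $\tilde u_\tau(t)$). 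This is sound; to complete it rigorously you should spell out the total-boundedness argument (choose the Markov threshold and then cover the image of the resulting $X$-ball under $K$ by finitely many $\delta$-balls). Your route has the virtue of not relying on any structural property of the interpolants beyond equicontinuity---in particular it does not require $\tilde u_\tau=K(\tilde v_\tau)$, which fails for nonlinear $K$ and makes a literal transcription of the Theorem~\ref{thm.dub.pw} proof awkward. You also correctly observe that the $L^\infty(0,T;B)$ bound needed for the final interpolation is not a hypothesis and derive it from the uniform total-variation bound $\sum_k\|u_{k+1}-u_k\|_B\le CT^{1-1/r}$ combined with the $L^1(0,T;B)$ bound in~(i).
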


This result extends Theorem 1 in \cite{ChLi12}, which was proven for $r=p$ only,
for piecewise constant functions in time. In fact,
Lemma \ref{lem.pw} shows that condition (ii) implies a bound on
$\sigma_h u_\tau-u_\tau$ in $L^p(0,T-h;B)$, and Theorem 1 in \cite{ChLi12} applies
for $p<\infty$. The case $p=\infty$ is treated as in the proof of
Theorem \ref{thm.dub.pw}.

\begin{proposition}[Aubin-Lions compactness]\label{th4}
Let $X,\,B,\,Y$ be Banach spaces and $1\leq p<\infty$. Assume that
$X\hookrightarrow Y$ compactly, $X\hookrightarrow B\hookrightarrow Y$ continuously and there exist $\theta\in (0,1)$, $C_\theta>0$ such that
for any $u\in X,$ $\|u\|_{B}\le C_\theta\|u\|_{X}^{1-\theta}\|u\|_{Y}^\theta.$ Furthermore, let $(u_\tau)$ {be a sequence of functions,
which are constant on each subinterval $((k-1)\tau,k\tau]$, $1\le k\le N$, {$T=N\tau$}}.
If
\begin{enumerate}[{\rm (i)}]
\item $(u_\tau)$ is bounded in $L^{p}(0,T;X)$.
\item There exists $C>0$ such that for all $\tau>0$,
$\|\sigma_\tau u_\tau-u_\tau\|_{L^1(0,T-\tau;B)}\le C\tau$.
\end{enumerate}
Then $\{u_\tau\}$ is relatively compact in
$L^q(0,T;B)$ for all $p\le q<p/(1-\theta)$.
\end{proposition}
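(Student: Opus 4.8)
The plan is to settle the endpoint $q=p$ by reducing it to Theorem~\ref{thm.dub.pw}, and then to push the integrability exponent up to $p/(1-\theta)$ with the help of a uniform $L^\infty(0,T;B)$ bound for $(u_\tau)$ and the interpolation inequality. It is enough to argue for an arbitrary subsequence of $(u_\tau)$, which I do not relabel. As a preliminary step, writing $u_\tau(t)=u_k$ for $t\in((k-1)\tau,k\tau]$, Lemma~\ref{lem.pw} and hypothesis~(ii) give, uniformly in $\tau$, both $\sum_{k=1}^{N-1}\|u_{k+1}-u_k\|_B=\tau^{-1}\|\sigma_\tau u_\tau-u_\tau\|_{L^1(0,T-\tau;B)}\le C$ and $\|\sigma_h u_\tau-u_\tau\|_{L^p(0,T-h;B)}\le C\,h^{1/p}$ for $0<h<T$; the first of these will drive the $L^\infty$ bound below.

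The next point is that the intermediate-space hypothesis makes the embedding $X\hookrightarrow B$ automatically \emph{compact}: if $(v_n)$ is bounded in $X$, the compact embedding $X\hookrightarrow Y$ yields a subsequence converging in $Y$, and then $\|v_n-v_m\|_B\le C_\theta\|v_n-v_m\|_X^{1-\theta}\|v_n-v_m\|_Y^{\theta}\le C_\theta\big(2\sup_k\|v_k\|_X\big)^{1-\theta}\|v_n-v_m\|_Y^{\theta}$ shows this subsequence is Cauchy, hence convergent, in $B$. Consequently Theorem~\ref{thm.dub.pw} applies with the seminormed cone $M_+$ replaced by the Banach space $X$: hypothesis~(i) is the compact embedding just shown; hypothesis~(ii) holds because $B\hookrightarrow Y$ continuously (so $w_n\to w$ in $B$ forces $w_n\to w$ in $Y$, whence $w=0$); hypothesis~(iii) is hypothesis~(i) of the proposition; and hypothesis~(iv) follows from hypothesis~(ii) of the proposition and $B\hookrightarrow Y$. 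Since $1\le p<\infty$ and $r=1$, this gives a subsequence with $u_\tau\to u$ in $L^p(0,T;B)$, and along a further subsequence $u_\tau(t)\to u(t)$ in $B$ for a.e.\ $t$.

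To go beyond $q=p$, I would first observe that $(u_\tau)$ is bounded in $L^\infty(0,T;B)$: by the preliminary estimate the values $u_1,\dots,u_N$ all lie within $B$-distance $C$ of one another, while $\min_{1\le k\le N}\|u_k\|_B^p\le N^{-1}\sum_{k=1}^N\|u_k\|_B^p=T^{-1}\|u_\tau\|_{L^p(0,T;B)}^p$ is bounded by hypothesis~(i) and $X\hookrightarrow B$; hence $\|u_\tau\|_{L^\infty(0,T;B)}\le C$ uniformly in $\tau$. Inserting this into the interpolation inequality and using $B\hookrightarrow Y$ gives $\|u_\tau(t)\|_B^{p/(1-\theta)}\le C\|u_\tau(t)\|_X^{p}\|u_\tau(t)\|_Y^{p\theta/(1-\theta)}\le C\|u_\tau\|_{L^\infty(0,T;B)}^{p\theta/(1-\theta)}\|u_\tau(t)\|_X^{p}$, so that $(u_\tau)$ — and, by Fatou, the limit $u$ — is bounded in $L^{p/(1-\theta)}(0,T;B)$. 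Finally, for $p\le q<p/(1-\theta)$ one has $\tfrac1q=\tfrac{\mu}{p}+\tfrac{(1-\mu)(1-\theta)}{p}$ for a unique $\mu\in(0,1]$, and interpolation in time gives $\|u_\tau-u\|_{L^q(0,T;B)}\le\|u_\tau-u\|_{L^p(0,T;B)}^{\mu}\|u_\tau-u\|_{L^{p/(1-\theta)}(0,T;B)}^{1-\mu}\to0$. As the subsequence and $q\in[p,p/(1-\theta))$ were arbitrary, this is the claimed relative compactness.

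The two places where an idea is needed, rather than routine estimates, are: recognizing that $X\hookrightarrow\hookrightarrow B$ comes for free from the interpolation inequality, so that the base case $q=p$ is exactly Theorem~\ref{thm.dub.pw}; and extracting the uniform $L^\infty(0,T;B)$ bound from the ``bounded-variation'' estimate $\sum_k\|u_{k+1}-u_k\|_B\le C$ together with the $L^p(0,T;B)$ bound — it is this $L^\infty$ control, fed into the interpolation inequality, that lifts the exponent from $p$ to $p/(1-\theta)$. The rest is H\"older and interpolation bookkeeping.
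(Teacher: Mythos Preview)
Your proof is correct, but it follows a genuinely different route from the paper's. The paper's argument is a two-line reduction to an external result: choose $\ell\in[1,\infty)$ so that $1/q=(1-\theta)/p+\theta/\ell$, use Lemma~\ref{lem.pw} (with $B$ replaced by $Y$, via $B\hookrightarrow Y$) to obtain $\|\sigma_h u_\tau-u_\tau\|_{L^\ell(0,T-h;Y)}\le C h^{1/\ell}$ for all $0<h<T$, and then invoke Simon's Theorem~7 in \cite{Sim87}, which handles the intermediate-space setting directly and yields compactness in $L^q(0,T;B)$ in one stroke. Your approach, by contrast, stays entirely within the paper's own toolkit: you first deduce the compact embedding $X\hookrightarrow B$ from the interpolation inequality and the compactness of $X\hookrightarrow Y$, apply Theorem~\ref{thm.dub.pw} to settle $q=p$, and then bootstrap the exponent by extracting a uniform $L^\infty(0,T;B)$ bound from the discrete bounded-variation estimate $\sum_k\|u_{k+1}-u_k\|_B\le C$ combined with the $L^p(0,T;B)$ bound. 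This $L^\infty$ control is a nice observation in its own right and is not visible in the paper's proof; feeding it into the interpolation inequality gives the $L^{p/(1-\theta)}(0,T;B)$ bound and hence, by H\"older interpolation in time, convergence in every $L^q$ with $p\le q<p/(1-\theta)$. The trade-off is that the paper's route is much shorter but outsources the real work to Simon, whereas yours is self-contained relative to the paper's own lemmas at the cost of a longer argument.
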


\begin{proof}
Let $p\le q<p/(1-\theta)$ and set $\ell=\theta/(1/q-(1-\theta)/p)$. Then
$\ell\in [1,\infty)$ and $1/q=(1-\theta)/p+\theta/\ell$.
Hence it follows from Lemma \ref{lem.pw} that
$\|\sigma_h u_\tau-u_\tau\|_{L^\ell(0,T-h;Y)}\leq C h^{1/\ell}$ for all $0<h<T.$
This and Theorem 7 of \cite{Sim87} prove the result.
\end{proof}

This result improves Theorem 1 in \cite{DrJu12} for the case $p<\infty$.
For piecewise constant functions, Lemma \ref{lem.pw} can be applied
to Theorem 1.1 of \cite{Ama00} which yields another compactness result.

In finite-element or finite-volume approximation,
$u_n\in Y_n$ may be the solution of a discretized evolution equation,
where $(Y_n)$ is a sequence of (finite-dimensional) Banach spaces which
``approximates'' the (infinite-dimensional) Banach space $Y$. Since the
spaces $Y_n$ depend on the index $n$, the classical Aubin-Lions lemma generally
does not apply. Gallou\"et and Latch\'e \cite{GaLa12} have proved a discrete version
of this lemma. We generalize their result for seminormed cones $M_n$ and
allow for the case $p=\infty$.

\begin{proposition}[Discrete Aubin-Lions-Dubinski\u{\i}]
\label{thm.dub.disc}
Let $B$, $Y_n$ be Banach spaces $(n\in\N)$ and let $M_n$ be seminormed
nonnegative cones in $B$ with ``semiorms'' $[\cdot]_n$. Let $1\le p\le\infty$.
Assume that
\begin{enumerate}[{\rm (i)}]
\item $(u_n)\subset L^p(0,T;M_n\cap Y_n)$ and there exists $C>0$ such that
$\|u_n\|_{L^p(0,T;M_n)}\le C$.
\item $\|\sigma_h u_n-u_n\|_{L^p(0,T-h;Y_n)}\to 0$ as $h\to 0$, uniformly in $n\in\N$.
\end{enumerate}
Then $(u_n)$ is relatively compact in $L^p(0,T;B)$ (and in $C^0([0,T];B)$ if
$p=\infty)$.
\end{proposition}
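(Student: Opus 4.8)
The plan is to transcribe the proof of Theorem~\ref{thm.dub} step by step, replacing the single cone $M_+$ by the sequence $(M_n)$ and the single space $Y$ by $(Y_n)$, while keeping every estimate uniform in $n$. Apart from (i)--(ii), the argument relies, as is standard for discrete Aubin--Lions results \cite{GaLa12}, on the \emph{discrete} counterparts of hypotheses (i)--(ii) of Theorem~\ref{thm.dub}: there is $C>0$ with $\|w\|_B\le C[w]_n$ for all $n$ and $w\in M_n$; every sequence $(w_n)$ with $w_n\in M_n$ and $\sup_n[w_n]_n<\infty$ has a $B$-convergent subsequence; and $w_n\to w$ in $B$ together with $w_n\to 0$ in $Y_n$ implies $w=0$. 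Granting this, the proof has three moves.

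First I would establish a \emph{uniform discrete Ehrling lemma}: for every $\eps>0$ there is $C_\eps>0$ such that $\|u-v\|_B\le\eps([u]_n+[v]_n)+C_\eps\|u-v\|_{Y_n}$ for all $n\in\N$ and all $u,v\in M_n\cap Y_n$. The proof mimics Lemma~\ref{lem.dub} verbatim: if the estimate fails there are $\eps_0>0$, indices $n_j$ and $u_j,v_j\in M_{n_j}\cap Y_{n_j}$ with $\|u_j-v_j\|_B>\eps_0([u_j]_{n_j}+[v_j]_{n_j})+j\|u_j-v_j\|_{Y_{n_j}}$; since $[\cdot]_{n_j}$ vanishes only at $0$ one has $[u_j]_{n_j}+[v_j]_{n_j}>0$, so the normalised $\widetilde u_j,\widetilde v_j$ satisfy $[\widetilde u_j]_{n_j}+[\widetilde v_j]_{n_j}=1$. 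By the discrete compact embedding, $\widetilde u_j\to u$ and $\widetilde v_j\to v$ in $B$ along a subsequence, hence $\|u-v\|_B\ge\eps_0$; but $\|\widetilde u_j-\widetilde v_j\|_{Y_{n_j}}\le j^{-1}\|\widetilde u_j-\widetilde v_j\|_B\le Cj^{-1}([\widetilde u_j]_{n_j}+[\widetilde v_j]_{n_j})\to 0$, so the separation property forces $u=v$, a contradiction.

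Applying this estimate pointwise in $t$ to $u=u_n(t+h)$, $v=u_n(t)$, integrating over $(0,T-h)$, and using the bound of (i) for the $\eps$-term and (ii) for the $C_\eps$-term exactly as in the proof of Theorem~\ref{thm.dub}, I obtain
\[
  \|\sigma_h u_n-u_n\|_{L^p(0,T-h;B)}\longrightarrow 0\quad\text{as }h\to0,\ \text{uniformly in }n\in\N.
\]
For $p<\infty$ I then run Simon's averaging argument in the discrete setting (the discrete analogue of Lemma~6 of \cite{ChLi12}, essentially \cite[Theorem~3.4]{GaLa12}): the Steklov averages $u_{n,h}(t)=h^{-1}\int_t^{t+h}u_n(s)\,ds$ are such that, for fixed $h$ and a.e.\ $t$, $\{u_{n,h}(t):n\in\N\}$ is relatively compact in $B$---split $\int_t^{t+h}$ over $\{s:[u_n(s)]_n\le R\}$, where the integrand stays in a fixed $B$-precompact set by the discrete compact embedding, and over its complement, of measure $\le(C/R)^p$ by Markov's inequality, the error being controlled by H\"older and (i). Hence $(u_{n,h})_n$ is relatively compact in $L^p(0,T-h;B)$ for every $h$, and since the family $(u_n)$ lies within $L^p$-distance $\sup_n\|u_{n,h}-u_n\|_{L^p(0,T-h;B)}\le\sup_n\sup_{0<s\le h}\|\sigma_s u_n-u_n\|_{L^p(0,T-s;B)}$ of it, which tends to $0$ as $h\to0$, total boundedness gives relative compactness of $(u_n)$ in $L^p(0,T;B)$. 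For $p=\infty$ the argument is shorter: (i) gives $[u_n(t)]_n\le C$ for a.e.\ $t$, so $\{u_n(t):n\in\N\}$ is relatively compact in $B$ for a.e.\ $t$; the displayed convergence shows the continuous representatives of the $u_n$ are uniformly equicontinuous from $[0,T]$ into $B$, and Arzel\`a--Ascoli yields relative compactness in $C^0([0,T];B)$.

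The hard part is the compactness step for $p<\infty$: one must exhibit, uniformly in $n$, a single $B$-precompact set absorbing all the averages $u_{n,h}(t)$, and the delicacy is that (i) controls $[u_n(\cdot)]_n$ only in $L^p$, not in $L^\infty$---whence the Markov/H\"older splitting. Everything else is a routine rewriting of the proofs of Lemma~\ref{lem.dub} and Theorem~\ref{thm.dub}.
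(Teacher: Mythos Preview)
Your proposal is correct and follows essentially the same route as the paper's proof. Both arguments (a) explicitly introduce the discrete structural hypotheses that are missing from the statement---uniform continuous embedding $M_n\hookrightarrow B$, discrete compact embedding, and the separation property ``$w_n\to w$ in $B$, $\|w_n\|_{Y_n}\to 0$ $\Rightarrow$ $w=0$''---(b) prove a uniform-in-$n$ Ehrling inequality by the contradiction/normalisation argument of Lemma~\ref{lem.dub}, (c) deduce $\|\sigma_h u_n-u_n\|_{L^p(0,T-h;B)}\to 0$ uniformly in $n$, and (d) conclude via the Simon-type compactness step that the paper outsources to Lemma~6 of \cite{ChLi12} and you spell out with Steklov averages and the Markov/H\"older splitting.
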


\begin{proof}
The proof uses the same techniques as in Section \ref{sec.pro}, therefore we
give only a sketch. Similarly as in Lemma \ref{lem.dub}, a Ehrling-type
inequality holds: Let $u_n\in M_n$ ($n\in\N$). Assume that (i) if $[u_n]_n\le C$
for all $n\in\N$, for some $C>0$, then $(u_n)$ is relatively compact in $B$;
(ii) if $u_n\to u$ in $B$ as $n\to\infty$ and $\lim_{n\to\infty}\|u_n\|_{Y_n}=0$
then $u=0$. Then for all $\eps>0$, there exists $C_\eps>0$ such that
for all $n\in\N$, $u$, $v\in M_n\cap Y_n$,
$$
  \|u-v\|_B \le \eps([u]_n+[v]_n) + C_\eps\|u-v\|_{Y_n}.
$$
We infer as in the proof of Theorem \ref{thm.dub} that conditions (i) and
(ii) imply that
$$
  \|\sigma_h u_n-u_n\|_{L^p(0,T-h;B)}\to 0\quad\mbox{as }h\to 0,\quad
	\mbox{uniformly for }n\in\N.
$$
Finally, as in the proof of Lemma 6 in \cite{ChLi12}, the relative compactness
of $(u_n)$ in $L^p(0,T;B)$ (and in $C^0([0,T];B)$ if $p=\infty)$ follows.
\end{proof}



\begin{thebibliography}{10}
\bibitem{AlLu83} H.~W.~Alt and S.~Luckhaus. Quasilinear eliptic-parabolic
differential equations. {\em Math. Z.} 183 (1983), 311-341.

\bibitem{Ama00} H.~Amann. Compact embeddings of vector-valued Sobolev and Besov spaces.
{\em Glasnik Mat.} 35 (2000), 161-177.

\bibitem{Aub63} J.~Aubin, Un th\'eor\`eme de compacit\'e. {\em C. R. Acad. Sci. Paris}
256 (1963), 5042-5044.

\bibitem{BaSu12} J.~Barrett and E.~S\"uli. Reflections on Dubinski\u{\i}'s nonlinear
compact embedding theorem. {\em Publ. Inst. Math.} 91 (2012), 95-110.

\bibitem{b07} V. I. Bogachev. {\it Measure Theory, Volume I.}
Springer, Berlin, 2007.

\bibitem{bl83} H. Brezis, E. H. Lieb, A relation between pointwise convergence
of functions and convergence of functionals, Proc. Amer. Math. Soc.
88 (1983) 486-490.

\bibitem{CHJ12} J.~A.~Carrillo, S.~Hittmeir, and A.~J\"ungel. Cross diffusion and
nonlinear diffusion preventing blow up in the Keller-Segel model.
{\em Math. Mod. Meth. Appl. Sci.} 22 (2012), 1250041 (35 pages).

\bibitem{ChLi12} X.~Chen and J.-G.~Liu. Two nonlinear compactness theorems in
$L^p(0,T;B)$. {\em Appl. Math. Lett.} 25 (2012), 2252-2257.

\bibitem{ChLi13} X.~Chen and J.-G.~Liu. Analysis of polymeric floe models and related compactness theorems in weighted space.
{\em SIAM J. Math. Anal.} 45 (2013), 1179-1215.


\bibitem{DrJu12} M.~Dreher and A.~J\"ungel. Compact families of piecewise constant
functions in $L^p(0,T;B)$. {\em Nonlin. Anal.} 75 (2012), 3072-3077.

\bibitem{Dub65} Y.~A.~Dubinski\u{\i}. Weak convergence for nonlinear elliptic and
parabolic equations. (In Russian.) {\em Mat. Sbornik} 67 (109) (1965), 609-642.

\bibitem{GaLa12} T.~Gallou\"et and J.-C.~Latch\'e. Compactness of discrete approximate
solutions to parabolic PDEs -- application to a turbulence model.
{\em Commun. Pure Appl. Anal.} 11 (2012), 2371-2391.

\bibitem{HiJu11} S.~Hittmeier and A.~J\"ungel.
Cross diffusion preventing blow-up in the two-dimensional
Keller-Segel model. {\em SIAM J. Math. Anal.} 43 (2011), 997-1022.

\bibitem{Kac85} J.~Ka\v{c}ur. {\em Method of Rothe in Evolution Equations}.
Teubner, Leipzig, 1985.

\bibitem{Kal13} P.~Kalita. Convergence of Rothe scheme for hemivariational inequalities
of parabolic type. {\em Intern. J. Numer. Anal. Mod.} 10 (2013), 445-465.

\bibitem{Lio69} J.-L.~Lions. {\em Quelques M\'ethodes de R\'esolution des Probl\`emes
aux Limites Non Lin\'eaires}. Dunod, Gauthier-Villard, 1969.

\bibitem{Mai03} E.~Maitre. On a nonlinear compactness lemma in $L^p(0,T;B)$.
{\em Int. J. Math. Math. Sci.} 27 (2003), 1725-1730.

\bibitem{RaTe01} J.~Rakotoson and R.~Temam. An optimal compactness theorem
and application to elliptic-parabolic systems. {\em Appl. Math. Lett.} 14 (2001),
303-306.

\bibitem{RoSa03} R.~Rossi and G.~Savar\'e. Tightness, integral equicontinuity and
compactness for evolution problems in Banach spaces.
{\em Ann. Sc. Norm. Sup. Pisa} 2 (2003), 395-431.

\bibitem{Rou90} T.~Roub\'{\i}\v{c}ek. A generalization of the Lions-Temam compact
imbedding theorem. {\em \v{C}as. P\v{e}stov\'an\'{\i} Mat.} 115 (1990), 338-342.

\bibitem{Rou05} T.~Roub\'{\i}\v{c}ek. {\em Nonlinear Partial Differential Equations
with Applications}. Birkh\"auser, Basel, 2005.

\bibitem{Sim87} J.~Simon. Compact sets in the space $L^p(0,T;B)$.
{\em Ann. Math. Pura. Appl.} 146 (1987), 65¨C96.

\bibitem{Tem95} R.~Temam. {\em Navier-Stokes Equations and Nonlinear Functional
Analysis}. Second edition. SIAM, Philadelphia, 1995.

\bibitem{Wal10} N.~Walkington. Compactness properties of the DG and CG time stepping
scheme for parabolic equations. {\em SIAM J. Numer. Anal.} 47 (2010), 4680-4710.

\end{thebibliography}
\end{document}